\theoremstyle{plain}
\newtheorem{prop}{Proposition}[section]
\newtheorem{coro}[prop]{Corollary}
\newtheorem{thm}[prop]{Theorem}
\theoremstyle{definition}
\newtheorem{exmp}[prop]{Example}
\def\mcg#1;#2{\Gamma_{#1,#2}}
\def\fg#1;#2{\Pi_{#1,#2}}
\def\tb#1;#2{\mathscr{K}_{\frac{#1}{#2}}}
\font\tendb=msbm10 at 12pt \font\sevendb=msbm10 at 9pt
\font\fivedb=msbm10 at 7pt
\def\db{\fam\dbfam\tendb}
\font\eufm=eufm10\font\eufms=eufm10\font\eufmss=eufm10\newfam\eufam
\font\tendbb=msbm10 at 12pt \font\sevendbb=msbm7 at 9pt
\font\fivedbb=msbm5 at 6pt
 \def \Z {{\db Z}}
\font\tenMmm=eusm10 at 12pt
\def\illu #1 by #2 (#3){
  \vbox to #2{
    \hrule width #1 height 0pt depth 0pt
    \vfill
    \special{illustration #3} 
    }
  }
\begin{document}

\title[Graph polynomials and  symmetries]
{Graph polynomials and  symmetries}


\author{Nafaa Chbili}
\address{Department of Mathematical Sciences\\ College of Science UAE University \\
17551 Al Ain, U.A.E.} \email{nafaachbili@uaeu.ac.ae}
\urladdr{http://faculty.uaeu.ac.ae/nafaachbili}

\date{}

\begin{abstract}
In a recent paper, we studied the interaction between the automorphism group of a graph and its Tutte polynomial.  More precisely, we proved that   certain symmetries of graphs are clearly reflected by their Tutte polynomials.  The purpose of this paper is to  extend this study to other  graph polynomials. In particular, we prove that if a graph $G$ has a symmetry of prime order $p$, then its characteristic polynomial, with coefficients in the finite filed  $\mathbb{F}_p$,
 is determined  by the characteristic polynomial   of its quotient graph $\overline G$.  Similar  results are also proved  for some generalization of the  Tutte  polynomial.\\
{ \it Key words.} Automorphism group, adjacency matrix, characteristic  polynomial, Tutte polynomial.\\
{\it AMS Subject Classification.} 05C31.
\end{abstract}
\maketitle
\section{Introduction}
Let $V=\{v_1,\dots,v_n\}$ be a finite set. A weight on $V$ is a symmetric  function $w:V\times V \longrightarrow \Z_{+}\cup\{0\}$. The pair $G=(V,w)$ is called a weighted graph.
 In other words,  a weighted graph  can be seen as a finite graph possibly with multiple
 edges and loops. In particular, if  $G$ is a simple graph in the standard terminology,  then the corresponding  symmetric function $w$ is  defined by  $w(u,v)=1$ if $u$ and $v$ are adjacent and 0 otherwise.
 Throughout this paper, wherever no confusion may arise,  we will simplify notations   and  use the term graph
 instead of weighted graph.  If the vertices of  $G$ are ordered   $v_1, v_2, \dots,  v_n$, then the   adjacency matrix of $G$,  is the  $n$-square matrix  $A(G)=[w_{i,j}]$ where  $w_{i,j}=w(v_i,v_j)$.
 In general, since  the function $w$ is symmetric, then so is the matrix  $A(G)$.
  The characteristic polynomial of $A(G)$ defined as  $det(xI_n-A(G))$ is known to be  independent  from  the order  of the vertices. It is denoted hereafter by $\varphi_{G}(x)$.
 If $D(G)=[d_{i,j}]$ is the degree matrix of $G$, which is the diagonal $n$-square matrix
defined by $d_{ii}=deg(v_i)=w(v_i,v_i)$, then the Laplacian matrix of $G$  is defined by $L(G)=D(G)-A(G)$. The characteristic polynomial of $L(G)$ is also  independent of the ordering  of the vertices
and is denoted hereafter by $\psi_G(x)$. Both polynomials $\varphi_G(x)$ and $\psi_G(x)$ are well-known graph invariants and the study of their spectra is  of central importance  in graph theory.\\
 An automorphism of a  weighted graph $G$ is a permutation  $\sigma$ of the set of vertices  $V$ such that $w(\sigma(u),\sigma(v))=w(u,v)$ for any pair of vertices $u$ and $v$.
 The set of all automorphisms of $G$ forms a group denoted here  by  $Aut(G)$.
Let $p\geq 2$ be  an integer. A graph  $G$ is said to be  $p$-periodic  if its automorphism group  $Aut(G)$ contains an element $h$ of order $p$.
For our purpose, we assume that the action defined by the element $h$ has no fixed vertices, which means that $h^p=Id$ and $h^i(v)\neq v$ for all $1\leq i \leq p-1$.\\
 Given a $p$-periodic  weighted graph $G$,  we define its quotient graph $\overline G$ as the weighted graph $(\overline V, \overline w)$, where $\overline V$ is the quotient set of $V$ under the action of $h$ and $\overline w$ is the weight function on $\overline V$ defined by $\overline w (\overline u, \overline v)=\displaystyle\sum_{i=0}^{p-1}w(u,h^i(v))$. A 3-periodic weighted  graph and its quotient are  pictured   below.\\
 \vspace{2mm}
 \begin{center}
\includegraphics[width=8cm,height=3.5cm]{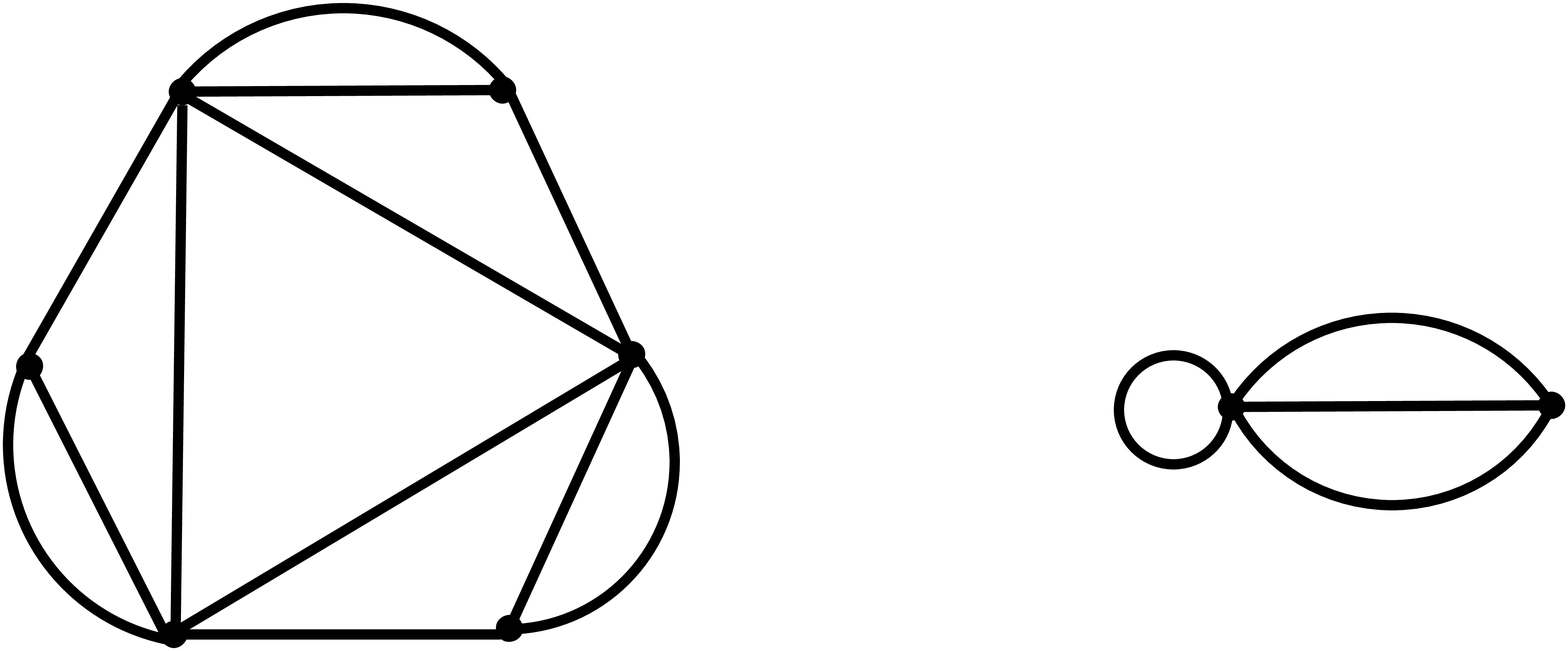}\\
Figure 1. A 3-periodic  graph (left) and its quotient graph (right)
\end{center}

In  \cite{Ch},  we studied the Tutte polynomial of  $p$-periodic  graphs and proved that certain  symmetries of  a given  graph are reflected by  its Tutte polynomial in a clear way.
 More precisely, we proved that if $G$  is $p$-periodic, with $p$ an odd prime, then     certain  coefficients of this two-variable  polynomial  are null modulo $p$.
It is worth mentioning that the original motivation behind our interest in this question  is the study of the quantum  invariants of periodic knots.
Recall  that the Jones polynomial of an alternating link is given by the Tutte polynomial of its Tait graph,  \cite{Th}. The Tutte polynomial is also related to  certain specialization of the  HOMFLYPT polynomial as it was proved in \cite{Ja}. The latter is known to be a good witness of  link symmetries, see  \cite{Ch1,Ch2,Pr,Tr1,Tr2,Yo}.
 The purpose of this paper is to  investigate whether the  way the Tutte polynomial interacts with graph symmetries extends to other graph polynomials. We study
 the behavior of characteristic polynomials $\varphi$ and $\psi$ defined above. In addition, we consider
the multi-variable  graph polynomial $U_G$  introduced in \cite{NW}. This graph invariant is an interesting  generalization of the Tutte polynomial. And the natural question we address
 here  is how does the condition satisfied by Tutte polynomial of a periodic graphs extend to the invariant  $U_G$.\\
Before stating our first theorem, we would like to mention  that several  results about the characteristic polynomial of  graphs with certain symmetries have been obtained earlier, see \cite{FKL, Wa} for instance. In this paper, we look at these polynomials as with coefficients in the finite field of $p$ elements $\mathbb{F}_p$. We shall prove the following necessary condition for a graph to be $p$-periodic.

\begin{thm}{\sl  Let $p$ be an odd prime and  $G$ be a $p$-periodic graph with quotient $\overline G$. Then $$\varphi_G(x)=\varphi_{\overline G}(x^p) \in \mathbb{F}_p[x].$$}
\end{thm}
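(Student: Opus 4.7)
The plan is to exploit the natural $\mathbb{F}_p$-representation of $\langle h\rangle$ on $\mathbb{F}_p^V$. Since $h$ has prime order $p$, the group algebra $\mathbb{F}_p[\langle h\rangle]$ is isomorphic to $R:=\mathbb{F}_p[t]/(t^p)$ via $t\leftrightarrow h-1$ (using $y^p-1=(y-1)^p$ in characteristic $p$). The hypothesis that $h$ acts freely means every orbit has size exactly $p$; fixing orbit representatives $v_1,\dots,v_m$ for $\overline V$ yields an identification $\mathbb{F}_p^V\cong R^m$ as $R$-modules, in which $e_{h^k(v_i)}$ corresponds to $(1+t)^k$ in the $i$-th factor. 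Thus each orbit contributes one free cyclic summand.

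Because $h$ is an automorphism of the weighted graph, $A(G)$ commutes with the action of $h$ and is therefore an $R$-linear endomorphism of $R^m$, represented by some $m\times m$ matrix $M$ with entries in $R$. A direct unwinding of the definitions gives
\begin{equation*}
M_{j,i}=\sum_{k=0}^{p-1}w\bigl(h^k(v_j),v_i\bigr)\,(1+t)^k,
\end{equation*}
so that reducing modulo $t$ produces exactly $\overline w(\overline{v_j},\overline{v_i})$; in other words,
\begin{equation*}
M\equiv A(\overline G)\pmod{t}.
\end{equation*}
This is the arithmetic counterpart of the fact that ``setting $h=1$ collapses $G$ to $\overline G$.''

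To recover the $\mathbb{F}_p$-characteristic polynomial of $A(G)$, I would use the filtration $V_k:=\ker(t^k)\subset R^m$, with $\dim_{\mathbb{F}_p}V_k=km$ and $0=V_0\subset V_1\subset\cdots\subset V_p=\mathbb{F}_p^V$. Since $A(G)$ commutes with $t$, it preserves every $V_k$, and multiplication by $t^{p-k}$ induces an $\mathbb{F}_p$-isomorphism $R^m/(t)\xrightarrow{\sim} V_k/V_{k-1}$ intertwining $A(G)|_{V_k/V_{k-1}}$ with $M\bmod t=A(\overline G)$. With respect to a basis adapted to this filtration, $A(G)$ is thus block upper-triangular with every diagonal block similar to $A(\overline G)$, whence
\begin{equation*}
\varphi_G(x)=\varphi_{\overline G}(x)^p\quad\text{in }\mathbb{F}_p[x].
\end{equation*}
The Frobenius identity $\bigl(\sum a_j x^j\bigr)^p=\sum a_j x^{pj}$ in $\mathbb{F}_p[x]$ rewrites this as $\varphi_G(x)=\varphi_{\overline G}(x^p)$, as desired. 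The main technical point is the congruence $M\equiv A(\overline G)\pmod{t}$, a careful translation of the definition of $\overline w$ into the $R$-module framework; once this is in place, the filtration argument and Frobenius finish the proof almost mechanically.
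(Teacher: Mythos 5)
Your proof is correct, but it takes a genuinely different route from the paper. The paper orders the vertices so that $A(G)$ becomes a block circulant matrix $(A_0,\dots,A_{p-1})$ and invokes the classical factorization $\varphi_G(x)=\prod_{k=0}^{p-1}\varphi_{T_k}(x)$ with $T_k=\sum_j\zeta^{kj}A_j$ for $\zeta$ a primitive $p$-th root of unity; it then reduces via the ring homomorphism $\mathbb{Z}[\zeta]\to\mathbb{Z}_p$ sending $\zeta\mapsto 1$, which collapses every factor to $\varphi_{\overline G}(x)$ and yields $\varphi_G\equiv\varphi_{\overline G}^p$. You instead never leave characteristic $p$: you use that $\mathbb{F}_p[\langle h\rangle]\cong\mathbb{F}_p[t]/(t^p)$ is local, that freeness of the action makes $\mathbb{F}_p^V$ a free module $R^m$, and that the $t$-adic filtration $\ker(t^k)$ block-triangularizes the $R$-linear map $A(G)$ with all diagonal blocks equal to $M\bmod t=A(\overline G)$. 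Your matrix $M=\sum_j A_j(1+t)^j$ (up to transposition, harmless since $w$ is symmetric) is really the generic version of the paper's $T_k$ with $y=1+t$ in place of $y=\zeta^k$, so the two arguments share the same underlying block structure; but your mechanism for extracting the characteristic polynomial is modular representation theory rather than complex diagonalization. What you gain is a self-contained argument with no roots of unity, no appeal to Friedman's eigenvalue formula for composite matrices, and a structural explanation of why the answer is a $p$-th power (a unipotent filtration with identical graded pieces); what the paper's route retains is the finer characteristic-zero factorization into the individual factors $\varphi_{T_k}$, which carries more information before reduction mod $p$. All the steps you flag as technical do check out: $\ker(t^k)=(t^{p-k})R^m$ has dimension $km$, multiplication by $t^{p-k}$ does induce the claimed isomorphism of graded pieces intertwining the $A(G)$-action with $A(\overline G)$, and the final Frobenius identity $\varphi_{\overline G}(x)^p=\varphi_{\overline G}(x^p)$ in $\mathbb{F}_p[x]$ is exactly how the paper concludes as well.
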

\begin{exmp}
The Theorem above shows that the coefficient of $x^{k}$ in  $\varphi_G(x)$ is  null modulo $p$ whenever  $k$ is not a multiple of $p$.
For the 3-periodic  graph in Figure 1, we have $\varphi_G(x)=x^6 - 18 x^4 - 14 x^3 + 66 x^2 + 36 x-81$. Its of the quotient graph is $\varphi_{\overline G}(x)=x^2-2x-9$. The necessary condition given by Theorem 1.1 is clearly satisfied.\\
However, for the Frucht graph $F$ pictured below,  the characteristic polynomial  is given by
$$\varphi_F(x)=x^{12}-18 x^{10}-6x^9+115x^{8}+66x^{7}-309x^{6}-226x^5+309x^4+244x^3-68x^2-48x.$$
 By considering this polynomial with coefficients reduced modulo $p$, we can easily see that the graph  is not $p$-periodic for any $p$ odd prime.
\begin{center}
\includegraphics[width=3cm,height=3cm]{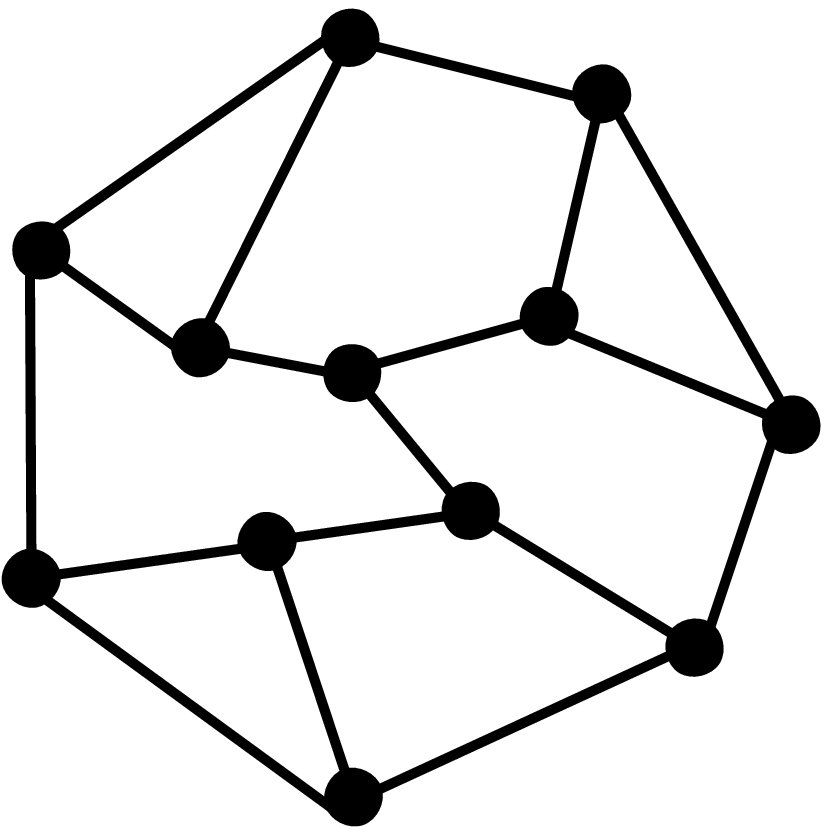}
\end{center}
\end{exmp}

A similar necessary condition for a graph to be periodic can be obtained in terms of the  characteristic polynomial of the  Laplacian matrix.
\begin{thm}{\sl { Let $p$ be an odd prime and  $G$ be a $p$-periodic graph. Then $\psi_G(x)$  is divisible by $x^p$ in  $\mathbb{F}_p[x^p]$}}.
\end{thm}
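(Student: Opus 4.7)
The plan is to run exactly the argument that works for Theorem 1.1, with the adjacency matrix replaced by the Laplacian $L(G) = D(G) - A(G)$. This works because $h$ preserves vertex degrees (being a weighted-graph automorphism), so the permutation matrix $P$ of $h$ commutes with $D(G)$ as well as with $A(G)$; consequently $L(G)$ and $M := xI - L(G)$ are $h$-invariant.

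Expanding $\psi_G(x) = \det M$ via the Leibniz formula and partitioning the $n!$ permutations into orbits under conjugation by $h$, orbits of size $p$ contribute $0$ modulo $p$, so only the permutations $\sigma \in S_n$ commuting with $h$ survive. Such a $\sigma$ is parametrized by a pair $(\bar\sigma, (k_1, \dots, k_m)) \in S_m \times (\mathbb{Z}/p)^m$, where $m = n/p$, $\bar\sigma$ permutes the $h$-orbits and $k_i$ is the shift $\sigma$ imposes on orbit $i$. For each orbit, the $h$-invariance of $M$ collapses $\prod_{l=0}^{p-1} M_{v_i^{(l)}, \sigma(v_i^{(l)})}$ to a single $p$-th power $M_{v_i^{(0)}, v_{\bar\sigma(i)}^{(k_i)}}^p$. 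Then Frobenius $(a+b)^p = a^p + b^p$ in characteristic $p$ lets me exchange sum and $p$-th power, reducing $\sum_{k_i} M_{v_i^{(0)}, v_{\bar\sigma(i)}^{(k_i)}}^p$ to $\bigl(\sum_l M_{v_i^{(0)}, v_{\bar\sigma(i)}^{(l)}}\bigr)^p$. By the definition of $\overline G$, together with the one-line identity $\deg_{\overline G}(\overline v) = \deg_G(v)$, the inner sum is exactly the corresponding entry $\overline M_{i,\bar\sigma(i)}$ of $\overline M := xI - L(\overline G)$.

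For the sign, I would check that for $p$ odd an $\ell$-cycle of $\bar\sigma$ lifts to either $p$ disjoint $\ell$-cycles of $\sigma$ (when the total shift along the cycle vanishes mod $p$) or to a single $\ell p$-cycle, and in both cases it contributes $(-1)^{\ell-1}$ to $\sign(\sigma)$, since $(-1)^{\ell p - 1} = (-1)^{\ell - 1}$ when $p$ is odd. Thus $\sign(\sigma) = \sign(\bar\sigma)$ regardless of the $k_i$. One further Frobenius (using $\sign(\bar\sigma)^p = \sign(\bar\sigma) \in \mathbb{F}_p$) pulls the $p$-th power out of the sum over $\bar\sigma$, yielding
\[
\psi_G(x) \;\equiv\; \psi_{\overline G}(x)^p \;=\; \psi_{\overline G}(x^p) \pmod{p},
\]
the last equality being Frobenius applied to coefficients. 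In particular $\psi_G(x) \in \mathbb{F}_p[x^p]$. Divisibility by $x^p$ then comes for free: the all-ones vector lies in the kernel of any Laplacian, so $\psi_{\overline G}(0) = 0$, hence $y \mid \psi_{\overline G}(y)$ in $\mathbb{F}_p[y]$; substituting $y = x^p$ yields $x^p \mid \psi_G(x)$ in $\mathbb{F}_p[x^p]$.

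The only substantive step is the sign computation for $\sign(\sigma)$; the remaining manipulations (Frobenius, orbit counting, and the key identity $\sum_l M_{v_i^{(0)}, v_j^{(l)}} = \overline M_{ij}$) are direct and essentially identical to those needed for Theorem 1.1.
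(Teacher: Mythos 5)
Your proof is correct, but it takes a genuinely different route from the paper's. The paper stays with the block-circulant machinery of its proof of Theorem 1.1: ordering the vertices orbit-by-orbit makes $L(G)$ a block circulant $(B_0,\dots,B_{p-1})$, Friedman's factorization gives $\psi_G(x)=\prod_{k=0}^{p-1}\varphi_{R_k}(x)$ with $R_k=\sum_j\zeta^{kj}B_j$ over $\Z[\zeta]$, and the reduction $f_p:\Z[\zeta]\to\Z_p$, $\zeta\mapsto 1$, collapses every factor to $\varphi_{R_0}(x)$, so $\psi_G(x)\equiv\varphi_{R_0}(x)^p$; divisibility by $x^p$ then follows because the rows of $R_0=B_0+\dots+B_{p-1}$ sum to zero. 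You instead expand $\det(xI-L(G))$ by the Leibniz formula, kill the size-$p$ conjugation orbits, parametrize the centralizer of $h$ as $\Z_p\wr S_m$, and use Frobenius twice; the sign analysis of how cycles of $\bar\sigma$ lift (which you correctly identify as the one nontrivial point, and which does work out since $\ell p-1\equiv\ell-1\pmod 2$ for $p$ odd) replaces the roots-of-unity bookkeeping. What your approach buys: it is self-contained (no appeal to the eigenvalue formula for composite matrices, no excursion through $\Z[\zeta]$), and it yields the sharper conclusion $\psi_G(x)\equiv\psi_{\overline G}(x^p)\pmod p$ --- the exact Laplacian analogue of Theorem 1.1 --- of which the stated divisibility is an immediate corollary. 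The paper's argument is shorter given the cited factorization and runs in strict parallel with its proof of Theorem 1.1; in fact its matrix $R_0$ equals $L(\overline G)$, so the same stronger congruence is implicit there, though the paper only extracts the eigenvalue $0$ from the vanishing row sums rather than identifying $R_0$ with the quotient Laplacian as you do.
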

Notice that for  3-periodic graph in Figure 1,  we have  $\psi_G(x)=x^6 + 24 x^5 + 219 x^4 + 936 x^3 + 1845 x^2 + 1350 x$, which is congruent to $x^6$ modulo $p$. Hence, $\psi_G(x)$ is divisible by $x^3$.
\section{The Characteristic polynomial}
In this section, we will prove the congruence relation given in Theorem 1.1.  Similar arguments will be used to prove Theorem 1.3.  Let $G$ be a $p$-periodic graph. Then the finite cyclic
 group of order $p$ acts freely on the set of vertices of $G$. This set splits into a partition made up of $s$ orbits, where $s$ is the number of vertices of the quotient graph $\overline G$.
 Since the action is free and $p$ is prime, then each orbit is made up of exactly $p$ elements.  Let us label the vertices of $\overline G$ as $v_1, \dots, v_{s}$. Let $\pi$ be the canonical  surjection from
 $V$ to $\overline V$. We label the elements  of $\pi^{-1}(v_i)$ as $v_i^0, \dots, v_i^{p-1}$, so that $h(v_i^k)=v_i^{k+1}$ if $0 \leq k< p-1 $ and $h(v_i^{p-1})=v_i^{0}$.
The order of the vertices of $\overline G$ extends to a natural order of the vertices of $G$. Namely,  $v_1^{0}, v_{2}^{0},\dots, v_{s}^{0},v_{1}^{1},v_{2}^{1},\dots ,v_{s}^{1},v_{s}^{2},\dots ,v_{s}^{p-1}$. With respect to this order, the adjacency matrix of $G$ will be a block circulant matrix of the form:
 $$A(G)=\left [\begin{array}{ccccc}
A_0&A_1&A_2&\dots&A_{p-1}\\
A_{p-1}&A_0&A_1&\dots&A_{p-2}\\
\vdots&\vdots&\vdots&\vdots&\vdots\\

A_{1}&A_2&A_3&\dots&A_0
\end{array} \right ]$$

Where $A_j$ is the $s$-square  matrix whose $(i,k)$ entry is   $w(v_i^0,v_k^j)$. We write $A(G)=(A_0,A_1,\dots,A_{p-1})$. Notice that $A(G)$ is a block circulant matrix,   because  for any $m$, we have $w(h^m(v_i^0),h^m(v_k^j))=w(v_i^0,v_k^j)$. Let us illustrate this
by considering  the adjacency  matrix of the 3-periodic  graph pictured  in Figure 1. For this graph $A(G)=(A_0,A_1,A_2)$, where $A_0=\left [\begin{array}{cc}
0&2\\
2&0
\end{array} \right ]$,
$A_1=\left [\begin{array}{cc}
0&0\\
1&1
\end{array} \right ]$ and
$A_2=\left [\begin{array}{cc}
0&1\\
0&1
\end{array} \right ]$, which can be displayed as:

$$\left [\begin{array}{cc|cc|cc}
0&2&0&0&0&1\\
2&0&1&1&0&1\\
\hline
0&1&0&2&0&0\\
0&1&2&0&1&1\\
\hline
0&0&0&1&0&2\\
1&1&0&1&2&0
\end{array} \right ]$$

Now, let $\zeta$ be a root of unity of order $p$. For  $0 \leq k \leq p-1$, we define  the $s$-square   matrix  $T_k=\displaystyle\sum_{j=0}^{p-1}\zeta^{kj}A_j$. According to \cite{Fr}, the characteristic polynomial of $A(G)$ is given by
$\varphi_G(x)=\displaystyle \prod_{k=0}^{p-1} \varphi_{T_k}(x)$, where  $\varphi _{T_k}(x)$ denotes the characteristic polynomial of the matrix $T_k$. It is easy to see that $T_0$ is the adjacency matrix of the quotient graph $\overline G$, hence $\varphi_{T_0}(x)=\varphi_{\overline G}(x)$.

Notice that each term in the product formula  is a polynomial on $x$ with coefficients in $\Z[\zeta]$ while $\varphi_G(x)$ belongs to $\Z[x]$ since the incidence matrix $A(G)$ is symmetric with integral coefficients.
Consider the   following homomorphism:
$$\begin{array}{rlll}
f_p:&\Z[\zeta]& \longrightarrow& \Z_p\\
&\sum n_i\zeta^i &\longmapsto& \sum n_i
\end{array}$$

This map  is well defined since the sum  $\sum n_i$ is considered modulo $p$. Moreover,
it extends to a homomorphism between  the polynomial rings $f_p: \Z[\zeta] [x]\longrightarrow \Z_p[x]$. We have clearly
 $$f_p(\varphi_{T_k}(x))=f_p(\varphi_{T_0}(x))=\varphi_{\overline G}(x).$$
 Consequently, we obtain the following congruence modulo $p$
$$\varphi_G(x)\equiv[\varphi_{\overline G}(x)]^p\equiv\varphi_{\overline G}(x^p).$$
This proofs Theorem 1.1.\\
The arguments used above apply to the Laplacian matrix $L(G)$ as well. If $G$ is $p$-periodic graph, then we can order the vertices as above  so that   $L(G)$ is a
 block circulant matrix of the form $(B_0,B_1,\dots,B_{p-1})$.
For $0\leq k \leq p-1$, consider the $s$-square matrix  $R_k=\displaystyle\sum_{j=0}^{p-1}\zeta^{kj}B_j$.
The  characteristic polynomial of the  block circulant matrix $L(G)$,  is given by the following identity
   $$\psi_G(x)=\displaystyle\prod_{k=0}^{p-1}\varphi_{R_k}(x)$$
where $\varphi_{R_k}(x)$ denotes the characteristic polynomial of the matrix $R_k$.
Using the map $f_p$ defined above we get the following congruence modulo $p$,
$$\psi_G(x)\equiv \displaystyle\prod_{i=1}^{p}\varphi_{R_0}(x)\equiv (\varphi_{R_0}(x))^p.$$

Since $R_0=B_0+\dots+B_{p-1}$, then the sum the of entries of each row of $R_0$ is zero. Hence, 0 is an  eigenvalue of $R_0$ and its characteristic polynomial is    divisible by $x$. Consequently, the polynomial
 $\psi_G(x)$ is divisible by $x^p$ in $\mathbb{F}_p[x]$.\\
\section{The  weighted graph polynomial}

The Tutte polynomial is an isomorphism invariant  of graphs that have been introduced in \cite{Tu}. It is a two-variable polynomial  $\tau_G(x,y)$ with integral coefficients  that generalizes the well known chromatic polynomial of graphs.
The Tutte polynomial has many  interesting applications in different fields such as knot theory and statistical physics.
This invariant carries important information about the graph and it can be defined in several  ways.  The most simple one is recursive   and based on  the deletion-contraction formula. Let $G$ be a graph and $e$ one of its edges. We denote by $G-e$ the graph obtained from $G$ by deleting edge $e$. By $G/e$ we denote the graph obtained  by identifying the two endpoints of  $e$. The Tutte polynomial can be defined by the following relations
$$ \tau_G(x,y) = \left\{
\begin{array}{llll}
x\tau_{G/e}(x,y)&&& \mbox { if $e$ is a bridge} \\
 y\tau_{G-e}(x,y) &&& \mbox {if $e$ is a loop}\\
\tau_{G-e}(x,y)+\tau_{G/e}(x,y) &&& \mbox{if $e$ is an ordinary edge}
\end{array} \right.
 $$

and the initialization $\tau_{E_n}(x,y)=1$, where $E_n$ is the graph with $n$ vertices and no edges. \\
In \cite{Ch}, we considered the version of the Tutte polynomial defined by setting  $T_G(s,t)=\tau_G(s+1,t+1)$ and
 proved that the Tutte polynomial strongly interacts with the automorphism group of the graph.  Actually, we proved that if a   graph $G$ admits an action of the finite cyclic group $\Z_p$ which leaves no edge fixed then
 $T_G(s,t) = \displaystyle\sum_{i,j}a_{i,j}s^it^j$ where $a_{i,j}\equiv 0$  modulo $p$  whenever $i-j$ is not congruent to $n-1$  modulo $p$, where $n$ is the number of vertices of the graph.\\
An interesting generalization of the Tutte polynomial has been introduced in \cite{NW}, where the authors define an multi-variable polynomial of vertex-weighted graphs which specializes to the Tutte polynomial.
Here, we will  only consider the restriction of that
 invariant to  ordinary graphs as defined in Section 5 of  \cite{NW}. Let $G$ be a graph with vertex set $V$ and edge set $E$. Let  $y,x_1,\dots x_n$, be commuting indeterminates.
 Then, $U_G(x_1\dots x_n,y)$ denoted for short $U_G(\boldsymbol {x},y)$ is the invariant of graphs defined by the
 formula:
 $$U_G(\boldsymbol {x},y)=\displaystyle\sum_{A\subseteq E} x_{n_1}\dots x_{n_k}(y-1)^{|A|-r(A)}.$$
Where the sum is taken through  all subsets of the edge set $E$ of $G$, $n_1,\dots, n_k$ are the number of vertices of the different components of $G|A$. Here $G|A$ denotes the restriction of $G$ to $A$, which is obtained from $G$ by removing all the edges not in $A$.   Finally, $r(A)$ denotes the rank of $A$ defined as  $r(A)=n-k(G | A)$, where
$k(G|A)$ is the number of connected components of $G|A$. The Tutte polynomial is a specialization  of  $U_G$ which is given by the following idendity
$$T_G(x,y)=(x-1)^{-k(G)}U_G(x_i=x-1,y).$$
It is worth mentioning that $U_G$ is much stronger invariant than the Tutte polynomial as, in many cases,  it distinguishes between  pairs of graphs which share the same Tutte  polynomial \cite{NW}.
\begin{thm} Let $p$ be an odd prime,  $G$ a $p$-periodic graph on $n$ vertices and $\overline G$ its quotient graph.  Then, the following congruence holds
$$ U_G({\boldsymbol {x}},y)\equiv U_{\overline G}({\boldsymbol {\overline x}},y)$$ modulo $p$, $x_k^p-x_k$, $x_{pk}-x_k$ and $y^p-y$, for all $k$, where $U_{\overline G}({\boldsymbol {\overline x}},y)$ denotes
 the polynomial $U_{\overline G}(x_1\dots x_{\frac{n}{p}},y)$.
\end{thm}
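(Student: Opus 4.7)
The plan is to mimic the idea behind Theorems 1.1 and 1.3, applied now to the sum-over-subsets formula for $U_G$. Let $h$ denote the order-$p$ automorphism. First, I would verify that $h$ acts freely on the edge set $E$: the only way $h$ could fix an edge $\{u,v\}$ is if $h(u)=v$ and $h(v)=u$, forcing $h^2$ to fix $u$; since $p$ is an odd prime and $h$ has no fixed vertices, this is impossible. Consequently every orbit of the induced action of $\langle h\rangle$ on $E$ (and hence on $2^E$) has size $1$ or $p$. Because the monomial $x_{n_1}\cdots x_{n_k}$ and the quantity $|A|-r(A)$ depend only on the isomorphism class of $G|A$, they are constant on each orbit, so non-invariant subsets contribute in packets of $p$ equal terms that vanish modulo $p$. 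The proof thus reduces to summing over $h$-invariant subsets $A\subseteq E$, which are exactly unions of edge orbits, hence in bijection with subsets $\overline A\subseteq \overline E$ via $A=\pi^{-1}(\overline A)$.

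For each invariant $A$, I would analyze the preimage of each component $\overline C_i$ of $\overline G|\overline A$ (with $\overline n_i$ vertices and $\overline m_i$ edges) separately. The components of $G|A$ lying above $\overline C_i$ form a single $\langle h\rangle$-orbit, so their number is either $1$ (a connected $p$-fold cover with $p\overline n_i$ vertices and $p\overline m_i$ edges) or $p$ (trivial cover by $p$ disjoint isomorphic copies of $\overline C_i$). In the first case the monomial contribution is $x_{p\overline n_i}$, congruent to $x_{\overline n_i}$ modulo $x_{pk}-x_k$; in the second it is $x_{\overline n_i}^p$, congruent to $x_{\overline n_i}$ modulo $x_k^p-x_k$. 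Either way, the total monomial attached to $A$ is congruent to $\prod_i x_{\overline n_i}$, matching the monomial attached to $\overline A$ in $U_{\overline G}$.

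The delicate step is the exponent of $y-1$. Writing $a$ for the number of components $\overline C_i$ whose lift is connected, a component-by-component count of cyclomatic numbers yields $|A|-r(A)=p(|\overline A|-r(\overline A))-(p-1)a$, since a connected $p$-cover of $\overline C_i$ has cyclomatic number $p\overline m_i-p\overline n_i+1=p\,\mathrm{cyc}(\overline C_i)-(p-1)$. The obstacle is that this exponent is not exactly $p$ times $|\overline A|-r(\overline A)$, so one cannot directly apply the Frobenius relation $(y-1)^p\equiv y-1\pmod{p,y^p-y}$. I would instead establish by induction on $k\ge 0$ the auxiliary identity $(y-1)\cdot(y-1)^{(p-1)k}\equiv(y-1)$ modulo $(p,y^p-y)$, with inductive step $(y-1)^{p-1}\cdot(y-1)=(y-1)^p\equiv y-1$. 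Setting $c=|\overline A|-r(\overline A)$, the exponent rewrites as $(c-1)+1+(p-1)(c-a)$ when $c\ge1$, and the identity collapses this to $(y-1)^c$; the remaining case $c=0$ forces $a=0$ and is trivial. Summing over invariant $A$'s then produces $U_{\overline G}(\overline{\boldsymbol x},y)$ exactly, completing the proof.
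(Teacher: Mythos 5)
Your proposal is correct and follows essentially the same route as the paper's proof: the free action on $E$ (hence on $2^E$), cancellation of size-$p$ orbits modulo $p$, the bijection between invariant subsets and subsets of $\overline E$, and the component-by-component comparison of $x_{pk}$ versus $x_k^p$ with $x_k$. The only difference is that you explicitly verify the congruence of the $(y-1)$ exponents via the identity $|A|-r(A)=p(|\overline A|-r(\overline A))-(p-1)a$ together with $a\le |\overline A|-r(\overline A)$, a step the paper simply asserts without computation; your version is the more complete argument.
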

\begin{proof} Notice that the finite group action on $G$ defines an action on  the subsets of $E$. Moreover, a subset $A$  of $E$ is either invariant by the action or belongs to an orbit made up of $p$ elements. It is clear that
if the orbit of subset $A$ is made up of $p$ elements, then the contributions of these elements in the  formula defining $U_G(\boldsymbol {x},y)$ add to zero modulo $p$.
Hence, to compute  $U_G(\boldsymbol {x},y)$ modulo $p$, one can only consider subsets $A$ of $E$ which are invariant by the action that $E$ inherits from the action  of $\Z_p$ on $G$.  If $A \subseteq E$ is invariant by the action, then we denote by $\overline A$, the quotient of $A$, which is actually a subset of  $\overline E$  the set of edges of the quotient graph $\overline G$.  Obviously, there is a one-to-one correspondence between the subsets of $E$ invariant by the action and the subsets of $\overline E$.
Assume that  $\overline G |\overline A$ is connected having  $k$ vertices. Then  the contribution of $\overline A$ in $U_{\overline G}$ is $x_k(y-1)^{|\overline A|-r(\overline A)}$. However, the contribution of the subset $A$  in $U_G$ is  $x_{pk}(y-1)^{|A|-r(A)}$  if $G|A$ is connected.
In the case  $G|A$ has $p$ connected components, then each component has $k$ vertices and  the contribution of $A$ in the summation formula is $x_k^p(y-1)^{|A|-r(A)}$. Notice that in both cases, the two  terms are congruent modulo  $x_k^p-x_k$, $x_{pk}-x_k$ and $y^p-y$.
If $\overline G |\overline A$  is not connected, then to each component of $\overline G |\overline A$ corresponds either one or $p$ connected components of $G|A$ as it was explained in the previous case. Consequently, the contribution of $\overline A$ in $U_{\overline G}$ is the same as the contribution of $A$ in $U_G$ modulo the ideal generated by $x_k^p-x_k$, $x_{pk}-x_k$ and $y^p-y$. This completes the proof.
\end{proof}
If we restrict the condition given in Theorem 3.1  to the case of the Tutte polynomial, then we obtain the following congruences which were  introduced earlier in  \cite{Ch}.
\begin{coro}
 Let $p$ be an odd prime,  $G$ a $p$-periodic   connected graph on $n$ vertices  and $\overline G$ its quotient. Then,
\begin{enumerate}
\item $T_G(s,t)\equiv T_{\overline G}(s,t)$ modulo $p, s^p-s$ and $t^p-t$.
\item $T_G(s,t)=\displaystyle\sum a_{i,j}s^it^j$, where  $a_{i,j}$ is null modulo $p$,  whenever $i-j$ is not congruent to $n-1$  modulo $p$.
\end{enumerate}
\end{coro}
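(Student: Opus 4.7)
My plan is to derive both parts of the corollary from Theorem 3.1 via the specialization $\tau_G(x,y) = (x-1)^{-k(G)} U_G(x_i = x-1, y)$ recalled in the text, together with the substitution $x = s+1$, $y = t+1$ that converts $\tau_G$ into $T_G$. Since $G$ is connected, the quotient $\overline G$ is connected as well, so $k(G) = k(\overline G) = 1$ and $U_G(s, \ldots, s, t+1) = s\,T_G(s,t)$, with the analogous identity for $\overline G$. Applying Theorem 3.1 with the substitution $x_i = s$ for every $i$ and $y = t+1$ is compatible with the defining ideal: each $x_k^p - x_k$ becomes $s^p - s$, $x_{pk} - x_k$ becomes $0$, and $(t+1)^p - (t+1) \equiv t^p - t \pmod{p}$ by the freshman's dream. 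Theorem 3.1 therefore yields
$$s\,T_G(s,t) \equiv s\,T_{\overline G}(s,t) \pmod{p,\; s^p-s,\; t^p-t},$$
from which part (1) would follow after cancelling $s$.

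For part (2), the cleanest route is the direct combinatorial argument from the subset expansion $T_G(s,t) = \sum_{A \subseteq E} s^{(n-1)-r(A)}\,t^{|A|-r(A)}$. The automorphism $h$ induces a permutation of $\mathcal{P}(E)$ that preserves both $|A|$ and $r(A)$, so every orbit contributes a single fixed monomial to the sum. Because the vertex action is free and $p$ is an odd prime, every edge orbit has exactly $p$ elements, and orbits on $\mathcal{P}(E)$ have size $1$ or $p$; size-$p$ orbits contribute $0$ modulo $p$. Any invariant $A$ is a union of edge-orbits, so $|A| \equiv 0 \pmod{p}$. Since the monomial $s^i t^j$ arises exactly from subsets with $|A| = j + (n-1) - i$, the coefficient $a_{i,j}$ vanishes modulo $p$ whenever $j + (n-1) - i \not\equiv 0 \pmod{p}$, equivalently $i - j \not\equiv n-1 \pmod{p}$.

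The main obstacle is the cancellation of $s$ in deriving part (1): in the ring $\mathbb{F}_p[s,t]/(s^p-s,\, t^p-t)$ the element $s$ is a zero divisor with non-trivial annihilator (generated by $s^{p-1}-1$), so the passage from $sT_G \equiv sT_{\overline G}$ to $T_G \equiv T_{\overline G}$ is not automatic from $U_G$-level specialization alone. To close this gap I would rerun the invariant-subset analysis of Theorem 3.1 directly on the Tutte expansion, pairing each invariant $A \subseteq E$ with its image $\overline A \subseteq \overline E$ and verifying that the corresponding Tutte monomials $s^{(n-1)-r(A)} t^{|A|-r(A)}$ and $s^{(n/p-1)-r(\overline A)} t^{|\overline A|-r(\overline A)}$ are congruent modulo $p$, $s^p-s$, and $t^p-t$ after reducing exponents via $s^p \equiv s$ and $t^p \equiv t$.
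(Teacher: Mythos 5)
Your part (2) is correct and is in substance the paper's own argument: the paper likewise reduces modulo $p$ to the $h$-invariant subsets $A$, uses $|A|\equiv 0 \pmod p$, and reads off that the exponent gap of each surviving monomial becomes $n-1$ after dividing $U_G(x_i=s,\,y=t+1)$ by $s$. You run the same orbit count directly on the rank expansion $T_G(s,t)=\sum_{A\subseteq E}s^{(n-1)-r(A)}t^{|A|-r(A)}$, which is equivalent and arguably cleaner, since it speaks about the coefficients $a_{i,j}$ themselves and avoids the division by $s$.

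For part (1) you have correctly identified a gap that the paper does not acknowledge: its entire proof of (1) is ``straightforward by setting $x_i=s$, $y-1=t$ in Theorem 3.1,'' which is exactly the specialization you perform and which only yields $s\,T_G\equiv s\,T_{\overline G}$; since $s\cdot(s^{p-1}-1)\equiv 0$ in $\mathbb{F}_p[s,t]/(s^p-s,\,t^p-t)$, the cancellation of $s$ is indeed not licensed. However, your proposed repair does not close the gap: the term-by-term congruence of Tutte monomials fails whenever a component of $\overline G|\overline A$ lifts to $p$ components of $G|A$ in such a way that one $s$-exponent drops to $0$ and the other does not. Concretely, take $G=C_6$ with $p=3$ acting by rotation by two steps, so that $\overline G$ is a double edge on two vertices, and let $A$ be one edge orbit (three pairwise disjoint edges): the $G$-side monomial is $s^{5-3}t^{0}=s^{2}$, the $\overline G$-side monomial is $s^{1-1}t^{0}=1$, and $s^{2}\not\equiv 1 \pmod{3,\;s^{3}-s}$. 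This is not merely a defect of the method: one computes $T_{C_6}(s,t)\equiv s^{5}+2s^{2}+t$ and $T_{\overline G}(s,t)=s+t+2$ modulo $3$, and these two classes differ at $s=0$ in $\mathbb{F}_3[s,t]/(s^{3}-s,\,t^{3}-t)$, so statement (1) as printed cannot be proved at all. What the specialization of Theorem 3.1 genuinely gives is $s\,T_G\equiv s\,T_{\overline G}$ modulo $p$, $s^{p}-s$, $t^{p}-t$, or equivalently $T_G\equiv T_{\overline G}$ modulo $p$, $s^{p-1}-1$, $t^{p}-t$ (where $s$ is invertible, with inverse $s^{p-2}$); the corollary should be restated in one of these forms.
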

\begin{proof} The proof of   condition (1) is straightforward by setting $x_i=s$, $y-1=t$ in the congruence given by Theorem  3.1. The second condition is a consequence of the fact that only periodic sets $A$ are to be considered when we compute $U_G$ modulo $p$. If $G|A$ has $k$ connected components then the term that corresponds to $A$ in the sum formula becomes, after setting $x=s$ and $y-1=t$,  $s^{k}t^{|A|-r(A)}=s^{k}t^{|A|-(n-k)}$. Since $|A|$ is a multiple of $p$, then the difference between the two powers of $s$ and $t$ is congruent to $n$ modulo $p$. We conclude using  the fact that if $G$  is connected then  $T_G(s,t)=s^{-1}U_G(x_i=s,y=t+1)$.
\end{proof}

\end{document}